\theoremstyle{plain}
\newtheorem{theorem}{Theorem}[section]
\newtheorem*{theorem2}{Main Theorem}
\newtheorem{cor}[theorem]{Corollary}
\newtheorem{prop}[theorem]{Proposition}
\newtheorem*{prop2}{Proposition}
\newtheorem{lemma}[theorem]{Lemma}
\theoremstyle{definition}
\newtheorem{remark}[theorem]{Remark}
\newtheorem{fact}[theorem]{Fact}
\newtheorem{definition}[theorem]{Definition}
\newtheorem{example}[theorem]{Example}
\newtheorem*{hyp}{Property ($\divideontimes$)}
\newcommand{\aq}{\bar{a}}
\newcommand{\bq}{\bar{b}}
\newcommand{\setN}{\mathbb{N}}
\newcommand{\lingua}{\mathcal{L}}
\newcommand{\M}{\mathcal{M}}
\newcommand{\N}{\mathcal{N}}
\newcommand{\eqL}{\overset{\lingua}{\equiv}}
\newcommand{\eqLn}{\overset{0}{\equiv}}
\def\Ind#1#2{#1\setbox0=\hbox{$#1x$}\kern\wd0\hbox to 0pt{\hss$#1\mid$\hss}
	\lower.9\ht0\hbox to 0pt{\hss$#1\smile$\hss}\kern\wd0}
\def\ind{\mathop{\mathpalette\Ind{}}}
\def\indS{\mathop{\mathpalette\Ind{}^{\star}}}
\def\indLd{\mathop{\mathpalette\Ind{}^{ld}}}
\def\indL{\mathop{\mathpalette\Ind{}^{\lingua}}}
\def\indLn{\mathop{\mathpalette\Ind{}^{0}}}
\def\notind#1#2{#1\setbox0=\hbox{$#1x$}\kern\wd0
	\hbox to 0pt{\mathchardef\nn=12854\hss$#1\nn$\kern1.4\wd0\hss}
	\hbox to 0pt{\hss$#1\mid$\hss}\lower.9\ht0 \hbox to 0pt{\hss$#1\smile$\hss}\kern\wd0}
\def\namedlabel#1#2{\begingroup
	\def\@currentlabel{#2}%
	\phantomsection\label{#1}\endgroup
}
\DeclareMathOperator{\tp}{tp}
\DeclareMathOperator{\stp}{stp}
\DeclareMathOperator{\acl}{acl}
\DeclareMathOperator{\dcl}{dcl}
\DeclareMathOperator{\dc}{dc}
\DeclareMathOperator{\Stab}{Stab}
\DeclareMathOperator{\St}{St}
\begin{document}

	\title[Stationarity and elimination of imaginaries]{Stationarity and elimination of imaginaries in stable and simple theories}
	\date{\today}
	\author{Charlotte Bartnick}
	\address{Abteilung für Mathematische Logik; Mathematisches Insitut; Universität Freiburg; Ernst-Zermelo-Straße 1; 79104 Freiburg; Germany }
	\email{charlotte.bartnick@math.uni-freiburg.de}
	\keywords{Stability, Stationarity, Imaginaries, Pairs, Separably Closed Fields}
	\subjclass{2010: 03C45}
	
	\begin{abstract}
	We show that types over real algebraically closed sets are stationary, both for the theory of separably closed fields of infinite degree of imperfection and for the theory of beautiful pairs of algebraically closed field. The proof is given in a general setup without using specific features of theories of fields. 
	
	Moreover, we generalize results of Delon as well as of Messmer and Wood that separably closed fields of infinite degree of imperfection and differentially closed fields of positive characteristic do not have elimination of imaginaries. Using work of Wagner on subgroups of stable groups, we obtain a general criterion yielding the failure of geometric elimination of imaginaries. This criterion applies in particular to beautiful pairs of algebraically closed fields, giving an alternative proof of the corresponding result of Pillay and Vassiliev.
	\end{abstract}

	\maketitle

	\section*{Introduction}

	Fields with extra structure are an important object of study in model theory. Classical examples of such theories are the theory of differentially closed fields in characteristic $0$ (see \cite{Blum},\cite{McGrail}), the theory of algebraically closed fields with an automorphism \cite{ACFA}, the theory of beautiful pairs of algebraically closed fields in a fixed characteristic \cite{P83Pairs} or the theory of separably closed fields in a given characteristic and of a fixed degree of imperfection \cite{D88Ideaux}.
	
	Often, such theories can be treated in a uniform way (see e.g. \cite{BMP19Simple}) due to the fact that they are either obtained from an algebraically closed field by adding new symbols (e.g. a derivation, a predicate) or their models embed into an algebraically closed field (e.g. separably closed fields). Concepts such as non-forking independence or model theoretic algebraic closure can then be described in terms of the stable theory $T_0$ of algebraically closed fields. 
	
	In a broader context, not exclusively considering theories of fields, the present note follows the approach of \cite{BMP19Simple}:  Throughout the work we consider an $\lingua$-theory $T$ such that the models of $T$ embed into models of a stable $\lingua_0$-theory $T_0$ where $\lingua_0 \subset \lingua$ and $T_0$ has elimination of quantifiers and imaginaries. In Section \ref{SectionStationarity}, we first introduce this setting and then define properties of $T$ with respect to $T_0$ that allow to deduce stationarity of types over algebraically closed sets in the home sort (i.e. real algebraically closed sets) in $T$ from stationarity in $T_0$.  
	Our approach applies in particular to classical examples (see Section \ref{SectionExamples}), which yields the following results (see Theorem \ref{TheoremMain}, Corollary \ref{CorPairs} and Corollary \ref{CorSCF}):
	\begin{theorem2}
	Given a stable $\lingua$-theory $T$ controlled by the stable $\lingua_0$-theory $T_0$ (see Definition \ref{DefinitionControlled}), types in $T$ over real algebraically closed sets are stationary.
		
		In particular, types over real algebraically closed sets are stationary in the theory $SCF_p^\infty$ of separably closed fields of infinite degree of imperfection. The same is true for the theory $T_0P$ of beautiful pairs of a stable nfcp theory $T_0$ whenever $T_0$  has elimination of quantifiers 	
		as well as elimination of imaginaries.
	\end{theorem2}
	 We  present the application of the theorem to the examples mentioned above in Section \ref{SectionExamples}.	
	After proving the result, we realized that stationarity of types in separably closed fields already follows from work of Krupiński \cite{K06SCF}. Yet, we believe that the general theorem contains some  interesting ideas.

	We will  state without proof all the properties used for our examples in Section \ref{SectionExamples}. The reader is referred to \cite{D99Separably} for further explanation on the theory of separably closed fields and to \cite{P83Pairs} as well as \cite{BYPV03Pairs} for material on beautiful pairs.

	The second part of this note links the results to non-elimination of imaginaries. In stable theories, weak elimination of imaginaries implies stationarity of types over real algebraically closed sets.
	Delon proved in \cite{D88Ideaux} that the theory  $SCF_p^\infty$ does not have elimination of imaginaries. Moreover, Messmer and Wood showed the same result for $DCF_p$ in \cite{MW95der}. 
	By a result of Pillay and Vassiliev in \cite{PV04Imaginaries}, the theory of pairs does not have geometric elimination of imaginaries if an infinite group is definable in $T_0$. 
	
	Motivated by Delon's proof and by the proof of Messmer and Wood, we formulate two variants of a general criterion that yield the failure of geometric elimination of imaginaries in an arbitrary simple $\lingua$-theory $T$ with $T_0^\forall \subset T$ for a stable $\lingua_0$-theory $T_0$, even after naming the elements of a model of $T$.  More precisely, we assume the existence of two new $\lingua$-definable subgroups $H_2 \leq H_1$ of an $\lingua_0$-definable group $G$. 
	The $\lingua$-definable groups $H_1$ and $H_2$ can be considered in a model $\M_0$ of $T_0$ as (possibly non-definable) subgroups of the stable group $G$. Wagner's results on definable hulls in stable groups from \cite{W90Subgroups} 
	enable us to deduce  in Proposition \ref{PropEliminationCriterionStable} and \ref{PropEliminationCriterionSimple} that imaginaries representing certain cosets of $H_2$ in $H_1$ cannot be geometrically eliminated. 
	
We state below a customized version of Proposition \ref{PropEliminationCriterionSimple} with $H_1=G(M)$:
		\begin{prop2}
			Let $(G,\cdot)$ be an $\lingua_0$-definable group in the stable $\lingua_0$-theory $T_0$. 	Given a simple $\lingua$-theory $T$ with $T_0^\forall \subset T$ satisfying Property \ref{HypothesisDcl} (see Section \ref{SectionStationarity}), suppose that there is a model $\M$ of $T$ such that $(G(M),\cdot)$ is a group in $\M$.
			
			Assume further that there exists an $\lingua$-definable subgroup $H$ of infinite index in $G(M)$, defined over a a small elementary substructure $\N$ of $\M$, and that: 
		\begin{enumerate}[(1)]
			\item for every $\lingua$-generic $g$ in $G(M)$ over $N$ we have that $\acl_{\lingua}(g,N) \subset \acl_0(g,N)$, and
			\item the $\lingua_0$-definable hull $\dc_G(H(M))$ of $H(M)$ in $G$ equals $G$.
		\end{enumerate}
		Then, the theory $T$ does not have geometric elimination of imaginaries, even after adding constants for the model $\N$ to the language.
	\end{prop2}
	
	We assume a certain familiarity with stability theory and the general theory of stable groups (see for example \cite{P01Groups} or \cite{W97StableGroups}). In Section \ref{SectionHull} we will introduce the work of Wagner on definable hulls and the setting in which we use his results. The criterion itself is presented in Section \ref{SectionImaginaries} where we also discuss variants and equivalent formulations. At the end of the section, we show that the criterion applies to the theories mentioned above (beautiful pairs and separably closed fields) and to the new example of separably differentially closed fields (see \cite{IS23differentially}). We hope that the criterion may provide more examples in the future.

	\subsection*{Acknowledgments} Part of this work was done during a research stay in Lyon supported by the ANR GeoMod (ANR-DFG,
	AAPG2019). The author would like to thank Thomas Blossier for the helpful scientific discussions in Lyon and her supervisor Amador Martin-Pizarro for all the advice on writing this note. Moreover, the author would like to thank the anonymous referee whose suggestions have improved the presentation of this article.

		\section{The setting and stationarity}\label{SectionStationarity}

\textbf{We fix a complete stable theory $T_0$ in a language $\lingua_0$ and assume all throughout this article that $T_0$ has  elimination of quantifiers and elimination of imaginaries.} Note that we may always achieve quantifier elimination by a Morleyization and elimination of imaginaries by considering $T_0^{eq}$. We will work inside a sufficiently saturated model $\mathcal{M}_0$ of $T_0$. 
	
	We now consider a theory $T$ in a language $\lingua$ expanding $\lingua_0$ such that
		$T_0^\forall \subset T$, or equivalently,
		such that every model $\mathcal{M}$ of $T$ embeds (as a structure in the sublanguage $\lingua_0$) into the model $\mathcal{M}_0$ of $T_0$. Since $\lingua$ is an expansion of $\lingua_0$, quantifier elimination yields that the $\lingua_0$-type of a tuple $\bar{m}$ in $M$ (seen as a tuple in $\mathcal{M}_0$) is completely determined by its quantifier-free type inside the $\lingua_0$-substructure $\mathcal{M}$, and in particular by its $\lingua$-type in $\mathcal{M}$. It follows immediately that the $\lingua_0$-definable and algebraic closures of subsets of $\mathcal{M}$ do not depend on the embedding into a model of $T_0$.
	
	From now on, we consider a sufficiently saturated model $\mathcal{M}$ of the
	theory $T$ inside $\M_0$. If not explicitly stated otherwise, all tuples and subsets will be taken in $\mathcal{M}$. If we want to refer to their properties with respect to the theory $T_0$, we will use the index $0$ and otherwise the index $\lingua$. For example, we denote by $\dcl_{\lingua}(A)$ (resp. $\dcl_0(A)$) the definable closure in $T$ (resp. in $T_0$) of a subset $A$. 
	
	Quantifier elimination of $T_0$ and the above discussion yield the following easy observations:
	
	\begin{remark}\label{RemarkBasicFactsOnSetting}
		
		Consider a subset $A$ as well as tuples $\bar{b}$ and $\bar{b}'$. 
		
		\begin{enumerate}
			\item  If $\tp_{\lingua}(\bq/A)=\tp_\lingua(\bq'/A)$ (also denoted as $\bar{b} \eqL_A \bar{b}'$), then $\bar{b} \eqLn_A \bar{b}'$. \label{ItemTypLImpliziertTyp0}
			\item The $\lingua_0$-substructure $\langle A \rangle_0 $ generated by $A$ is contained in the $\lingua$-substructure $ \langle A \rangle_{\lingua}$ and hence in $ \mathcal{M}$. \label{ItemZusammenhangDerUS}
			\item If $A$ is $\lingua$-definably closed, then  $\dcl_0(A) \cap\mathcal{M} =A$. More generally we have that 
			$\dcl_0(A)\cap \mathcal{M} \subset \dcl_{\lingua}(A)$.  \label{ItemZusammenhangDerDcls} 
			\item \label{ItemZusammenhangDerAcls} Similarly, $\acl_0(A)\cap \mathcal{M} \subset \acl_{\lingua}(A)$, so  $\acl_0(A) \cap \mathcal{M} =A$ if $A$ is $\lingua$-algebraically closed.  
			\item  If $T_0$ is a reduct of $T$, we have that 
			$\acl_0(A) \subset \acl_{\lingua}(A)$, so $\lingua$-algebraically closed subsets are also $\lingua_0$-algebraically closed  \label{ItemReduktZusammenhangDerAcls}.
		\end{enumerate}
		
	\end{remark}

	In Section \ref{SectionExamples} we will show that types over real algebraically closed sets  in certain classical stable theories are stationary. The properties described here do not require $T$ to be stable and work all under the more general assumption of simplicity. However, the result of Theorem \ref{TheoremMain} will imply $T$ to be stable. For the sake of generality, we will from now on assume that $T$ is simple and denote by $\indL$ the non-forking independence in $T$, whilst $\indLn$ denotes non-forking independence in the sense of $T_0$.
	
	The following general property of $T$ with respect to $T_0$ will be used several times throughout this work.
	
		\begin{hyp}\namedlabel{HypothesisDcl}{($\divideontimes$)}(\cite[Hypothèse 1]{BMP19Simple})
		For every $\lingua$-elementary substructure $\mathcal{N}$ of $\M$, every element of  $\dcl_0({N})$ is $\lingua_0$-interdefinable with a tuple from $N$.
	\end{hyp}
	\begin{remark}\label{RemarkDClRedukt}\label{RemarkHypothesisDclTrue}
		In most of our examples, the theory $T$ will be a theory of fields and $T_0$ the theory of algebraically closed fields in characteristic $p$ (for $p$ prime or $0$). In this case, the definable closure $\dcl_0(N)$ of a model $\N$ of $T$ equals the field theoretic inseparable closure.  Hence, for any element in $\dcl_0({N})$, a certain $p$-power lies in ${N}$, so Property \ref{HypothesisDcl} holds.
		
		Moreover, if $T_0$ is a reduct of $T$, Property \ref{HypothesisDcl} is clearly satisfied.
	\end{remark}
	  Using Remark \ref{RemarkBasicFactsOnSetting} (\ref{ItemZusammenhangDerAcls}), {Blossier and Martin-Pizarro} showed the following:

	\begin{fact}(\cite[Remarque 1.7]{BMP19Simple})\label{FactStationarityT0}
		Given a simple $\lingua$-theory $T$ with $T_0^\forall \subset T$ and such that Property \ref{HypothesisDcl} holds, types in $T_0$ over $\lingua$-algebraically closed sets are stationary. 
	\end{fact}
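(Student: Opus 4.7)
The plan is to appeal to the standard criterion for stationarity in a stable theory with elimination of imaginaries: a type $\tp_0(\bar{b}/A)$ is stationary precisely when
\[\acl_0(A)\cap\dcl_0(A\bar{b})\subseteq\dcl_0(A).\]
For $A=\acl_{\lingua}(A)\subseteq M$ and a tuple $\bar{b}$ from $M$, the goal is to verify this inclusion using Property \ref{HypothesisDcl}.

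Fix $e\in\acl_0(A)\cap\dcl_0(A\bar{b})$. Since both $A$ and $\bar{b}$ lie in $M$, the element $e$ belongs to $\dcl_0(M)$, so Property \ref{HypothesisDcl} provides a tuple $\bar{m}$ from $M$ which is $\lingua_0$-interdefinable with $e$. In particular $\bar{m}\in\dcl_0(e)\subseteq\acl_0(A)$, and since $\bar{m}$ is a tuple in $M$, Remark \ref{RemarkBasicFactsOnSetting} (\ref{ItemZusammenhangDerAcls}) (applied to the $\lingua$-algebraically closed set $A$) gives $\bar{m}\in\acl_0(A)\cap M=A$. Interdefinability then yields $e\in\dcl_0(\bar{m})\subseteq\dcl_0(A)$, establishing the required containment and hence the stationarity of $\tp_0(\bar{b}/A)$.

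The only delicate point is the stationarity criterion itself, which relies on stability of $T_0$ together with its elimination of imaginaries; once it is granted, the proof is a routine chase of closures, in which Property \ref{HypothesisDcl} converts $\lingua_0$-definability over $M$ into interdefinability with a tuple from $M$, and Remark \ref{RemarkBasicFactsOnSetting} (\ref{ItemZusammenhangDerAcls}) confines that tuple to $A$. A small subtlety worth noting is that the argument really uses $\bar{b}\subseteq M$, since otherwise $\dcl_0(A\bar{b})$ need not be a subset of $\dcl_0(M)$ and Property \ref{HypothesisDcl} cannot be invoked; this is exactly the situation in which condition (II) of Lemma \ref{LemmaStrategyH3} is applied, so the restricted statement suffices.
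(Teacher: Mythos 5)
Your proof is correct. The paper itself gives no proof of this Fact (it is quoted from Blossier and Martin-Pizarro, Remarque 1.7), but the surrounding discussion indicates that the cited argument turns on exactly the two ingredients you use: Property \ref{HypothesisDcl} to replace an element of $\dcl_0(M)$ by an $\lingua_0$-interdefinable tuple from $M$, and Remark \ref{RemarkBasicFactsOnSetting} (\ref{ItemZusammenhangDerAcls}) to conclude that this tuple lies in $A$. Your reduction to the criterion $\acl_0(A)\cap\dcl_0(A\bar{b})\subseteq\dcl_0(A)$ (valid because $T_0$ is stable with elimination of imaginaries) is a legitimate substitute for the canonical-base formulation, and your closing caveat that the argument needs $\bar{b}$ to lie in $M$ is consistent with the paper's standing convention that all tuples are taken in $\mathcal{M}$.
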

	To prove this fact, stability of $T_0$ and elimination of imaginaries are crucial since they yield that types in $T_0$ over real $\lingua_0$-algebraically closed sets are stationary. In the particular case that $T_0\subset T$, every $\lingua$-algebraically closed set $A$ is also $\lingua_0$-algebraically closed (see Remark \ref{RemarkBasicFactsOnSetting} (\ref{ItemReduktZusammenhangDerAcls})) and thus the fact follows immediately. 
	\begin{remark}\label{RemarkStationarity}
		 The following condition (which was first considered by Lascar for models in \cite[Section 3.1]{L89amalgam}) is equivalent to the stationarity of the type $\tp_0(B/A)$:
		  
		 For every set $C \supset A$ and $\lingua_0$-elementary maps $f:B \to B'$ and $g: C \to C'$ such that ${f\hspace{-1mm}\upharpoonright}_A={g\hspace{-1mm}\upharpoonright}_A $ and $B \indLn_A C$ as well as $B' \indLn_{f(A)} C'$, the map $f\cup g: B \cup C \to B' \cup C'$ is $\lingua_0$-elementary.
	\end{remark}
	Blossier, Martin-Pizarro and Wagner showed in \cite[Lemme 2.1]{BMPW15Relative} for the case of $T_0$ being a reduct of $T$ that non-forking independence in $T$ implies independence in $T_0$. With our additional Property \ref{HypothesisDcl}, their proof can be easily adapted and yields the following:
	\begin{fact}(\cite[Lemme 2.1]{BMPW15Relative})\label{FactIndependenceImplication}
		Let $T$ be a simple $\lingua$-theory with $T_0^\forall \subset T$ and such that Property \ref{HypothesisDcl} is satisfied. For all  $A=\acl_{\lingua}(A)$ algebraically closed and $B$ and $C$ arbitrary subsets holds
		$$B \indL_A C \quad \Rightarrow \quad B \indLn_A C.$$			
	\end{fact}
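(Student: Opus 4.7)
The plan is to run the Morley-sequence argument of \cite[Lemme 2.1]{BMPW15Relative}, replacing the reduct hypothesis used there by Fact \ref{FactStationarityT0} at the one point where it is needed.

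First, since $T$ is simple and $B\indL_A C$, choose an $\lingua$-Morley sequence $(B_i)_{i<\omega}$ of $\tp_\lingua(B/A)$ with $B_0=B$ that is $\lingua$-indiscernible over $AC$. Applying Remark \ref{RemarkBasicFactsOnSetting} (\ref{ItemTypLImpliziertTyp0}) to every finite sub-tuple, the sequence $(B_i)$ has the same Ehrenfeucht--Mostowski type over $AC$ in $\lingua_0$ as it does in $\lingua$; so it is $\lingua_0$-indiscernible over $AC$ (and a fortiori over $A$), and each $B_i$ realizes $\tp_0(B/A)$ in $\mathcal{N}_0$.

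Second, upgrade to a Morley sequence in the sense of $T_0$. By Property \ref{HypothesisDcl} together with Fact \ref{FactStationarityT0}, the type $\tp_0(B/A)$ is stationary over the $\lingua$-algebraically closed set $A$. A standard fact in stable theories ensures that an infinite $A$-indiscernible sequence whose elements all realize a given stationary type over $A$ is automatically an $A$-Morley sequence in that type; hence $(B_i)_{i<\omega}$ is an $\lingua_0$-Morley sequence of $\tp_0(B/A)$ over $A$ inside $\mathcal{N}_0$.

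Finally, conclude via the standard stable-theory characterization of non-forking: an $A$-Morley sequence (in a stable theory) that remains $AC$-indiscernible forces $C$ to be $\indLn$-independent from the full sequence over $A$. Thus $C \indLn_A \bigcup_{i<\omega} B_i$; monotonicity yields $C \indLn_A B_0 = B$, and symmetry of $\indLn$ in the stable theory $T_0$ gives the desired conclusion $B \indLn_A C$.

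The heart of the adaptation is the second step. Without Property \ref{HypothesisDcl}, the sequence $(B_i)$ would a priori only be a Morley sequence over the larger base $\acl_0(A)\cap \mathcal{N}_0$, and one would then need an additional argument to contract the base back to $A$. Stationarity of $\tp_0(B/A)$ over the $\lingua$-algebraically closed set $A$ is precisely what sidesteps this subtlety; in the reduct setting of \cite{BMPW15Relative}, the inclusion recalled in Remark \ref{RemarkBasicFactsOnSetting} (\ref{ItemReduktZusammenhangDerAcls}) makes the same point automatic.
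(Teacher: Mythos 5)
The paper does not actually write out a proof of this Fact: it cites \cite[Lemme 2.1]{BMPW15Relative} and asserts that the argument there adapts once Property \ref{HypothesisDcl} is available. Your attempt must therefore stand on its own, and its second step — which you yourself identify as the heart of the adaptation — rests on a false ``standard fact''. An infinite $A$-indiscernible sequence of realizations of a stationary type over $A$ need \emph{not} be an $A$-Morley sequence in that type. Concretely, in the ($\omega$-stable) theory of a single equivalence relation $E$ with infinitely many infinite classes, the unique $1$-type $p$ over $\emptyset=\acl(\emptyset)$ is stationary (its only nonforking extension to a model puts $x$ in a fresh class), yet a sequence of distinct elements of one fixed class is indiscernible over $\emptyset$ and consists of realizations of $p$, while $E(a_1,a_0)$ forks over $\emptyset$, so the sequence is not independent. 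Stationarity of $\tp_0(B/A)$ thus does not upgrade your $\lingua_0$-indiscernible sequence to an $\lingua_0$-Morley sequence over $A$; to do that you would need the $B_i$ to be $\indLn$-independent over $A$, which is essentially an instance of the statement being proved. Your final step is fine, but only once this independence is actually established.

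The missing idea is a canonical base argument, and it is there — not via Fact \ref{FactStationarityT0} — that Property \ref{HypothesisDcl} genuinely enters. Sketch: with $I=(B_i)_{i<\omega}$ as in your first two steps, let $e=\Cb_0(\mathrm{Av}_0(I/\mathcal{N}_0))$. Standard stability theory gives that $I$ is $\indLn$-independent over $e$, that each $B_i$ realizes the average type over $ACe$ together with the other terms, and that $e$ lies in $\dcl_0^{eq}$ of any infinite subsequence of $I$. Since $T_0$ eliminates imaginaries and, by Property \ref{HypothesisDcl}, every element of $\dcl_0(M)$ is $\lingua_0$-interdefinable with a tuple from $M$, one may replace $e$ by a real tuple $\bar e$ of $M$ lying in $\dcl_\lingua(AI_1)\cap\dcl_\lingua(AI_2)$ for two disjoint infinite subsequences $I_1,I_2$ (using Remark \ref{RemarkBasicFactsOnSetting} (\ref{ItemZusammenhangDerDcls})). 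As $I$ is $\indL$-independent over $A$, this yields $\bar e\indL_A\bar e$, hence $\bar e\in\acl_\lingua(A)=A$ and so $e\in\acl_0^{eq}(A)$; the average-type computation then gives $B\indLn_A C$. Your proposal skips exactly this descent of the canonical base to $A$, which is the substance of \cite[Lemme 2.1]{BMPW15Relative}.
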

	To state the other assumptions we need to fix a suitable class of substructures.
	\begin{definition}
		A class $\mathcal{K}$ of $\lingua$-substructures of $\M$ is called \emph{strong} if the following property holds: 		
		Every $\lingua$-isomorphism $f:A \to A'$ with $A$ and $A'$ in $\mathcal{K}$ is elementary (i.e. $A \eqL A'$).
		
		The elements of $\mathcal{K}$ are called \textit{strong} substructures. 
		
	\end{definition}
	In other words, any two substructures from $\mathcal{K}$ having the same quantifier free $\lingua$-type have the same $\lingua$-type. If $T$ has quantifier elimination, the class of all substructures is strong. We will discuss in Section \ref{SectionExamples} other examples of theories, without quantifier elimination, which admit a suitable class of strong substructures.
	\begin{remark}
		Suppose that $A$ is a strong substructure of $\M$ and $f:A \to \M$ an embedding with $f(A)$ also strong. Since $f$ is an elementary map, the set $A$ is $\lingua$-algebraically (or definably) closed if and only if the same holds for $f(A)$.
	\end{remark}
	\begin{definition}\label{DefinitionControlled}
		The simple $\lingua$-theory $T$ is \emph{controlled by} the stable $\lingua_0$-theory $T_0$ (which has elimination of imaginaries and quantifiers)\emph{ with respect to} the strong class $\mathcal{K}$ if the following conditions hold:
		\begin{enumerate}[(a)]
			\item We have $\lingua_0 \subset \lingua$ and $T_0^\forall \subset T$.
			\item The theory $T$ satisfies Property \ref{HypothesisDcl}.
			\item The class $\mathcal{K}$ contains all $\lingua$-definably closed sets.
			
			\item \label{PropertyUS}  For strong subsets $B$ and $C$ such that $A= B\cap C$ is $\lingua$-algebraically closed and $B \indL_A C$, (the universe of) the $\lingua_0$-generated substructure $\langle B, C \rangle_0$ coincides with the $\lingua$-generated substructure $\langle B, C \rangle_\lingua$ and is strong.
			\item \label{PropertyMorph}  For $A, B$ and $C$ as in (\ref{PropertyUS}), whenever ${f:\langle B , C \rangle_{\lingua} \to \M}$ is an $\lingua_0$-embedding whose restrictions ${f\hspace{-1mm}\upharpoonright}_B$ and ${f\hspace{-1mm}\upharpoonright}_C$ are $\lingua$-embeddings, if $f(B), f(C)$ and  $f(A)$ are strong with $f(B) \indL_{f(A)} f(C)$, then $f$ is an $\lingua$-embedding.	
		\end{enumerate}
	\end{definition}

	\begin{example}\label{ExampleRandomGraph}
		In the language $\lingua=\{R\}$ consisting of a binary relation symbol, consider the theory $T$ of the random graph  (with $\lingua_0$ the empty language and $T_0$ the theory of an infinite set). By quantifier elimination, the class of all substructures is strong and Property \ref{HypothesisDcl} holds trivially. Moreover,  (\ref{PropertyUS}) of Definition \ref{DefinitionControlled} is satisfied since $\lingua$ is purely relational.  		
				
		However, types over real algebraically closed sets in $T$ are not stationary: The independence $\indL$ is given by purely algebraic independence, but the relations between elements of $B$ and $C$ are not determined by the relations within $B$ and $C$. Indeed, by this reasoning (\ref{PropertyMorph})  of Definition \ref{DefinitionControlled} is not satisfied. 
		
	\end{example}

	\begin{theorem}\label{TheoremMain} 
		If the simple $\lingua$-theory $T$ is controlled by the stable $\lingua_0$-theory $T_0$ (see Definition \ref{DefinitionControlled}), then types over real algebraically closed sets are stationary.
	\end{theorem}
	In particular, types over models of $T$ are stationary and thus $T$ is stable.
	\begin{proof} Let $A=\acl_{\lingua}(A)$ be a real algebraically closed set and $\bq$ a tuple in $M$. In order to show that the type $\tp_\lingua(\bar{b}/A)$ is stationary,  choose $\bar{b}'$  such that $\bar{b}' \eqL_A \bar{b}$ and suppose that for some $C \supset A$ both $\bar{b} {\indL_A} C$ and $\bar{b}'\indL_A C$. We want to conclude $\bar{b}\eqL_C\bar{b}'$.
		
		We may assume that $C$ is $\lingua$-definably closed. Set $B=\dcl_{\lingua}(\bq A)$ and $B'=\dcl_{\lingua}(\bq' A)$. From the independence and the $\lingua$-algebraically closedness of $A$ we get \[B\cap C=A=B'\cap C.\]

		 By Fact \ref{FactIndependenceImplication}, we obtain from Property \ref{HypothesisDcl} the independences $B \indLn_A C $ and $B' \indLn_A C$. The assumption $B \eqL_A B'$ yields an $\lingua$-Isomorphism $g: B \to B'$ fixing $A$. This map $g$ is in particular an $\lingua_0$-Isomorphism and thus an elementary map by quantifier elimination of $T_0$.

		  As the type $\tp_0(B/A)$ is stationary by Fact \ref{FactStationarityT0} , we can amalgamate $g$ with $\mathrm{id}_C$ to an $\lingua_0$-elementary map $g \cup \mathrm{id}_C$ from $B \cup C$ to $B' \cup C$ (see Remark \ref{RemarkStationarity}). By elementarity, the map $g \cup \mathrm{id}_C$ extends to an $\lingua_0$-isomorphism  $f: \langle B , C \rangle_{0} \to \langle B' , C \rangle_{0}$ fixing $C$.
		 
		 Note that by (\ref{PropertyUS}) of Definition \ref{DefinitionControlled} (using (c)), the substructures $\langle B ,C \rangle_{0}$ and $\langle B',C \rangle_{0}$ coincide with the $\lingua$-generated ones. Hence, by  (\ref{PropertyMorph}) the map $f$ is an $\lingua$-embedding of $\langle B ,C \rangle_{\lingua}$ into $\M$ with image $\langle B',C \rangle_{\lingua}$. Since the $\lingua$-substructures $\langle B,C \rangle_{\lingua}$ and $\langle B',C \rangle_{\lingua}$ are in the controlling class $\mathcal{K}$ by  (\ref{PropertyUS}),  the embedding $f$ witnesses  that $$\langle B,C \rangle_{\lingua} \eqL \langle B',C \rangle_{\lingua}.$$
			In particular, $\bar{b}\eqL_C \bar{b}'$ as desired.
	\end{proof}
	
	Note that if (\ref{PropertyUS}) and (\ref{PropertyMorph}) of Definition \ref{DefinitionControlled} hold for a certain subclass of algebraically closed substructures $A$, the above proof yields stationarity of types over such sets.

	\section{Examples}\label{SectionExamples}

	In this section we will discuss how Theorem \ref{TheoremMain} applies to some classical theories.
	\subsection*{Beautiful pairs}

	\label{FactBP}
	Given a stable $\lingua_0$-theory $T_0$, Poizat considered a proper pair $\mathcal{M}' \prec \mathcal{M}$  of models of $T_0$, with ${M}'\subsetneq {M}$, in the language $\lingua=\lingua_0\cup \{P\}$. Such a pair is called \textit{$\kappa$-beautiful} (in the terminology of \cite{BYPV03Pairs}) if $\mathcal{M}'$ is $\kappa$-saturated and every $\lingua_0$-type over $BM'$, with $B$ a subset of $M$ of size less than $\kappa$, is realized in $\mathcal{M}$.

	Poizat proved in \cite{P83Pairs} that any two $\vert  T_0 \vert ^+$-beautiful pairs are elementary equivalent. Hence, the common theory $T_0P$ of such beautiful pairs is complete. If $T_0$ does not have the  finite cover property, every $\vert T_0 \vert^+$-saturated model is a beautiful pair. Moreover, the theory $T_0P$ is again stable and without the finite cover property. 
	
	We now assume that $T_0$ has elimination of quantifiers 	
	as well as elimination of imaginaries but not the finite cover property (nfcp). 
	We denote a sufficiently saturated model of $T$ by $(\M,P(M))$.
	
	\begin{fact}\cite[Lemma 3.8]{BYPV03Pairs}\label{FactStrongPairs}
		The class $\mathcal{K}$ of $\lingua$-substructures $A$ of $\M$ with  $A \indLn_{P(A)} P(M)$ is strong. (These strong substructures were called  $P$-independent in \cite{BYPV03Pairs}.)
	\end{fact}
	In \cite[Proposition 7.3]{BYPV03Pairs} Ben-Yaacov, Pillay and Vassiliev gave the following characterization of independence: For strong substructures $A, B$ and $C$ such that $A \subset B\cap C$ we have \begin{equation}\label{Independenceinpaircharaterization}
	B\indL_A C\quad \Leftrightarrow \quad B \indLn_{A \cup P(M) } {C \cup P(M)} \quad \text{ and }  \quad B \indLn_{{A}} {C} \tag{$\star$}.
	\end{equation}
	\begin{lemma}\label{LemmaPairsProperty2}
		The theory $T_0P$ of beautiful pairs is controlled by $T_0$ with respect to the class $\mathcal{K}$ introduced in Fact \ref{FactStrongPairs}. 
	\end{lemma}
	\begin{proof}
		We check all the conditions of Definition \ref{DefinitionControlled}. Part (a) is obvious and Property \ref{HypothesisDcl} holds for the theory $T_0P$ by Remark \ref{RemarkHypothesisDclTrue}. 	Moreover, definably closed subsets are strong by \cite[Remark 7.2 (i)]{BYPV03Pairs}.
		
		 Since $\lingua\setminus \lingua_0$ does not contain any new function symbols,  it suffices to show for (\ref{PropertyUS}) that $\langle B, C \rangle_{0}$ is strong whenever the strong substructures $B$ and $C$ are independent over their intersection. 	A verbatim adaptation of \cite[ Lemme 1.2]{BMP14Groupes} yields from (\ref{Independenceinpaircharaterization}) that 
		\[B \cup C \indLn_{P(B)\cup P(C)} P(M),\]	so  $\langle B, C \rangle_{0}$ is strong. Moreover, the above $\lingua_0$-independence implies that
		\begin{equation}\label{PairsPpartIndependence}
		P(\langle B, C \rangle_{0}) =P(M) \cap \langle B, C \rangle_{0}=  \acl_0(P(B),P(C)) \cap \langle B ,C \rangle_{0}, \tag{$\diamondsuit$}
		\end{equation}
		since the right to left inclusion holds by  $\lingua_0$-algebraically closedness of $P(M)$.

		In order to verify (\ref{PropertyMorph}), we consider strong substructures $B$ and $C$ with $B \indL_{B\cap C} B$ such that $f(B)$ and $f(C)$ are also strong with $f(B)\indL_{f(B)\cap f(C)} f(C)$ where $f$ is  an $\lingua_0$-embedding $f:\langle B, C \rangle_{0}=\langle B, C \rangle_{\lingua} \to \M$   as in the statement. To show that $f$ is an $\lingua$-embedding, it suffices to prove that $f$ respects the predicate $P$. This follows from (\ref{PairsPpartIndependence}) (applied to $B$ and $C$ as well as $f(B)$ and $f(C)$) since the restrictions ${f\hspace{-1mm}\upharpoonright}_B$ and ${f\hspace{-1mm}\upharpoonright}_C$ witness that $f(P(B))=P(f(B))$ and $f(P(C))=P(f(C))$. 
	\end{proof}
	Theorem \ref{TheoremMain} immediately yields: 
	\begin{cor} \label{CorPairs}
		In the theory $T_0P$ of beautiful pairs of $T_0$, types over real algebraically closed sets are stationary. \qed
	\end{cor}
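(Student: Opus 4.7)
The plan is simply to invoke Theorem \ref{TheoremMain} for $T = T_0P$, since all the required hypotheses have already been assembled in the preceding discussion. Three things need to be checked: that $T_0P$ is a simple $\lingua$-theory controlled by the stable $\lingua_0$-theory $T_0$, and that Properties \ref{HypothesisGood} and \ref{HypothesisUs} both hold.

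Control is immediate from the definition of a beautiful pair: every model $(\M,P(M))$ of $T_0P$ has $\M \models T_0$ as its $\lingua_0$-reduct, so it embeds into a sufficiently saturated model of $T_0$ in the sublanguage $\lingua_0$. Simplicity (in fact stability) of $T_0P$ is Poizat's theorem from \cite{P83Pairs}, using the standing hypothesis that $T_0$ has elimination of quantifiers and imaginaries and does not have the finite cover property. Property \ref{HypothesisGood}, applied to the class $\mathcal{K}$ of good (i.e., $P$-independent) substructures, together with Property \ref{HypothesisDcl}, is recorded in Remark \ref{RemarkPairsHypotheses1and3}, drawing on \cite[Lemma 3.8]{BYPV03Pairs} and \cite[Remark 7.2 (i)]{BYPV03Pairs}. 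Property \ref{HypothesisUs} is exactly the content of Lemma \ref{LemmaPairsProperty2}.

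With all hypotheses verified, Theorem \ref{TheoremMain} applies and yields that types over real $\lingua$-algebraically closed sets in $T_0P$ are stationary. There is no real obstacle at this stage; the substantive work was carried out in Lemma \ref{LemmaPairsProperty2}, where the characterization (\ref{Independenceinpaircharaterization}) of independence in pairs and the identity (\ref{PairsPpartIndependence}) were used to control the predicate $P$ on $\langle B, C\rangle_0$ and thereby obtain condition (III) of Lemma \ref{LemmaStrategyH3}. The corollary is thus a formal consequence of the framework set up in Sections \ref{SectionStationarity} and \ref{SectionGeneralImplications}.
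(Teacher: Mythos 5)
Your proposal is correct and follows exactly the paper's route: the corollary is obtained by combining Remark \ref{RemarkPairsHypotheses1and3} (Properties \ref{HypothesisGood} and \ref{HypothesisDcl}, goodness of definably closed sets) with Lemma \ref{LemmaPairsProperty2} (Property \ref{HypothesisUs}) and then invoking Theorem \ref{TheoremMain}. Your additional remarks on control and on stability of $T_0P$ via Poizat are accurate and merely make explicit what the paper leaves implicit.
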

	In the case of beautiful pairs, our proof of stationarity resonates with the ideas of \cite[Proposition 7.5 (c)]{BYPV03Pairs}.
	\subsection*{Separably closed fields of infinite degree of imperfection}
	Recall that a field $K$ is \textit{separably closed} if $K$ has no proper separable algebraic extensions. We will restrict our attention to fields of prime characteristic $p$. The linear dimension $[K:K^p]$ of $K$ over the subfield of $p$-powers is always of the form $p^e$ for some natural number $e$ or infinite. The number $e$ resp. $\infty$ is called the \textit{degree of imperfection}. Ershov showed in \cite{E87Fields} that the class of separably closed fields of characteristic $p$ and fixed degree of imperfection, axiomatized in the ring language, yields a complete stable theory. 
	
	In this subsection, we consider the theory $T=SCF_p^\infty$ of separably closed fields of infinite degree of imperfection  in the language $\lingua=\{0,1,+,-,\cdot, ^{-1}\}$. Note that the function symbol $^{-1}$ in the language ensures that every substructure is a subfield. The theory contains the universal part of  the theory $T_0=ACF_p$ of algebraically closed fields in the language $\lingua_0=\lingua$.	
	Consider a sufficiently saturated model $\M$ of $SCF_p^\infty$.
	
	Recall that an extension $L/K$ of fields is \textit{separable} if $K$ is linearly disjoint from the subfield of $p$-powers $L^p$ over $K^p$, denoted by $K \indLd_{K^p}{L}^p$.

	\begin{remark}\label{RemarkDefinitonGoodSCF}
		The class $\mathcal{K}$ of $\lingua$-substructures $A$ of $\M$ such that the field extension $M/A$ is separable is strong. 
		
		 Indeed, the proof of completeness by Ershov in \cite{E87Fields} yields a description of types: Given $K_1$ and $K_2$ separably closed fields both of characteristic $p$ and the same degree of imperfection and $k_i \subset K_i$ subfields such that the extension $K_i/k_i$ is separable, then $k_1$ and $k_2$ have the same type if they are isomorphic as fields (c.f. \cite[Theorem p.717]{S86Independence}).

	\end{remark}
	
	The non-forking independence in separably closed fields was studied by Srour. His result  \cite[Theorem 13]{S86Independence} implies the following characterization for $SCF_p^\infty$: Given $k_1$ and $k_2$  strong subfields such that $k_0 = k_1 \cap k_2$ is relatively algebraically closed in $M$,   
	\begin{equation}\label{IndependenceCharSCF}
	k_1 \indL_{k_0} k_2 \quad \Leftrightarrow  \quad k_1 \indLn_{k_0} k_2 \text{ and $M/\langle k_1, k_2\rangle_{0} $ is separable.} \tag{$\star\star$}
	\end{equation}

	\begin{remark}\label{RemarkSCFProperty2}
		The theory $T=SCF_p^\infty$ is controlled by $T_0=ACF_p$ with respect to $\mathcal{K}$.
		
		Indeed, all Conditions of Definition \ref{DefinitionControlled} are satisfied: Property \ref{HypothesisDcl} holds by Remark \ref{RemarkHypothesisDclTrue} and definably closed sets are strong by \cite[Lemma 0]{S86Independence}. 
		Since $\lingua=\lingua_0$, (\ref{PropertyUS}) follows directly from the characterization of independence in (\ref{IndependenceCharSCF}). Part (\ref{PropertyMorph}) is again immediate as $\lingua=\lingua_0$. 			
	\end{remark}
	
	\begin{cor} \label{CorSCF}
		In the theory $SCF_p^\infty$ of separably closed fields of infinite degree of imperfection, types over real algebraically closed sets are stationary.\qed
	\end{cor}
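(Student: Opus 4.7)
The plan is to invoke Theorem \ref{TheoremMain} with $T = SCF_p^\infty$ in the ring language $\lingua=\{0,1,+,-,\cdot,{}^{-1}\}$ and $T_0 = ACF_p$ in the same language, so in particular $\lingua_0 = \lingua$. Since $T_0$ has elimination of quantifiers and imaginaries and every separably closed field of characteristic $p$ embeds into its algebraic closure (a model of $T_0$), the hypothesis that $T$ is controlled by $T_0$ is satisfied. It therefore only remains to verify Properties \ref{HypothesisGood} and \ref{HypothesisUs} for the class of good substructures singled out in Definition \ref{DefinitonGoodSCF}, namely subfields $A \subset \M$ such that $\M/A$ is separable.

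Property \ref{HypothesisGood} has already been recorded in Remark \ref{RemarkSCFHypotheses1and3}, where it is deduced from Ershov's description of types: two good subfields of $\M$ with the same field-theoretic isomorphism type have the same $\lingua$-type. Property \ref{HypothesisDcl} holds because $\dcl_0(\M)$ is the inseparable closure of $\M$ as a field, so some $p$-power of any element of $\dcl_0(\M)$ already lies in $\M$; and definably closed subsets are good by Srour's classical lemma. The content of the corollary is therefore concentrated in Property \ref{HypothesisUs}.

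For Property \ref{HypothesisUs}, I would proceed as in Remark \ref{RemarkSCFProperty2}. Part (a) is immediate from the characterization (\ref{IndependenceCharSCF}) of non-forking in $SCF_p^\infty$ due to Srour: for good $B, C$ with $A = B \cap C$ algebraically closed in $\lingua$ satisfying $B \indL_A C$, the extension $\M/\langle B, C\rangle_0$ is separable, so $\langle B, C\rangle_0$ is good; the equality $\langle B, C\rangle_0 = \langle B, C\rangle_\lingua$ is automatic because $\lingua = \lingua_0$. For part (b), I would invoke Lemma \ref{LemmaStrategyH3}: conditions (I) and (II) follow from Facts \ref{FactIndependenceImplication} and \ref{FactStationarityT0} using Property \ref{HypothesisDcl}, while condition (III) is trivial here because any $\lingua_0$-isomorphism is an $\lingua$-isomorphism when $\lingua = \lingua_0$. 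Theorem \ref{TheoremMain} then delivers stationarity over real algebraically closed sets.

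The main obstacle is not in the corollary itself, which reduces to a bookkeeping exercise once the machinery of Sections \ref{SectionStationarity} and \ref{SectionGeneralImplications} is in place; the real point is recognizing that Srour's characterization (\ref{IndependenceCharSCF}) of non-forking has exactly the shape needed to match the abstract framework, so that separability of $\M/\langle B, C\rangle_0$ coincides with the notion of goodness built into the setup. Once that identification is made, the $\lingua = \lingua_0$ coincidence renders the remaining verifications essentially formal.
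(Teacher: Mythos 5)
Your proposal is correct and follows exactly the route the paper takes: the corollary is obtained by feeding Remark \ref{RemarkSCFHypotheses1and3} (Properties \ref{HypothesisGood} and \ref{HypothesisDcl}, goodness of definably closed sets) and Remark \ref{RemarkSCFProperty2} (Property \ref{HypothesisUs} via Srour's characterization (\ref{IndependenceCharSCF}) and Lemma \ref{LemmaStrategyH3}, with condition (III) trivial since $\lingua=\lingua_0$) into Theorem \ref{TheoremMain}. No discrepancies to report.
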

	
	In \cite[Proposition 2.2.2.]{K06SCF} Krupiński showed that the theory $SCF_p^\infty$ satisfies a weak version of elimination of imaginaries. It already follows implicitly from his result (using \cite[Proposition 3.9]{CF04Imaginaries}) that types over real algebraically closed subsets in $SCF_p^\infty$ are stationary. Our approach however yields a more direct proof.
	
	\begin{remark}
		There are two more classical examples to which our results apply:
		The theory $DCF_0$ of differentially closed fields in characteristic $0$ in the language $\lingua=\lingua_0 \cup \{D\}$ and the theory $SCF_p^e$ of separably closed fields of characteristic $p$ and finite degree of imperfection $e$ in the language $\lingua=\lingua_0 \cup\{(c_i)_{i<e}, (\lambda_r)_{r<p^e}\}$ with constants for a $p$-basis.
		In both cases, the class of all substructures is strong by quantifier elimination. The other properties are easily verified.
		
		We do not claim that the stationarity of types over algebraically closed sets is new in these cases, since both theories have elimination of imaginaries.
	\end{remark}

 		\section{Definable hulls in stable groups}\label{SectionHull}
 		\subsection{General results}\label{SectionHullGeneralResults}
 		 In this subsection, we will present some results of Wagner on groups and their subgroups in stable theories. We work in a sufficiently saturated model of a stable theory (with elimination of imaginaries). The results presented below remain valid for type-definable stable groups but for the sake of the presentation, we will assume that ambient model is the universe of a group $(G,\cdot)$. Moreover, we identify a formula with the corresponding definable subset.

 	 Recall that a definable subset $\varphi(x,\aq)$ is \textit{generic} in the stable group $G$ if $G$ is covered by finitely many bi-translates of the form $g^{-1} \cdot \varphi(x,\aq)\cdot h^{-1}$ with $g$ and $h$ elements of $G$. Since the order property does not hold in stable theories, we always have that either a definable subset or its complement is generic in $G$. This yields the existence of \textit{generic} types of $G$, i.e. types that only contain generic formulae. Note that the	type $p=\tp(g/A)$ is generic if ($\circ$) for every $h$ in $G$  independent from $g$ the product $g\cdot h$ is independent from $h$. 
 		
 		The \textit{connected component} of $G$, denoted by $G^0$, is the intersection of all
 		definable subgroups of finite index. The (left-)\textit{stabilizer} of a stationary type $p=\tp(g/A)$ is the set \[\Stab_G(p)=\{h \in G \mid h \cdot \mathfrak{p}=\mathfrak{p}\}\] 
 		where $\mathfrak{p}$ is the unique global non-forking extension of $p$ and $g\cdot \mathfrak{p}$ is given by the natural action of $G$ on the space of types induced by left translation. Using the definability of types in stable theories, the stabilizer of $p$ can be written as the intersection of the definable subgroups \[\Stab_G(p,\varphi)=\{g \in G \mid \, \models  \forall \bar{z} (d_p\varphi(gx,\bar{z}) \leftrightarrow d_p\varphi(x,\bar{z}) )\}.\] A type is generic in $G$ if and only if its stabilizer equals $G^0$. We refer the reader to Poizat's volume on stable groups  \cite{P01Groups} for proofs and details.
 		\begin{remark}
 			In Section \ref{SectionImaginaries} we will also consider generic types and stabilizers for groups definable in simple theories. In this case, there is no notion of generic formula but a type is called generic if it satisfies the condition in $(\circ)$. Since types are not necessarily stationary, the left-stabilizer $\Stab_G(p)$ of a type $p$ is defined as the subgroup generated by the set \[\St(p)= \{h \in G \mid \exists g \models p \text{ with }g \ind_A h \text{ such that } h\cdot g \models p \}.\] Right-stabilizers are defined analogously. More details and proofs can be found in \cite{C26Groups}.
 		\end{remark}
 		 Wagner showed in \cite{W90Subgroups} that the notions for stable groups extend to non-definable subgroups of $G$.

 		\begin{definition}(\cite[Definitions 1.1.2, 2.1.1, 2.1.2 and 2.1.3]{W97StableGroups}) 	\label{DefinitionWagnerGenerics}
 			Let $H$ be a (possibly non-definable) subgroup of the stable group $(G, \cdot)$. 
 			\begin{enumerate}
 				\item A definable subset $\varphi(x,\aq)$ of $G$ is called \emph{generic for $H$} if $H$ is covered by finitely many bitranslates of $\varphi(x,\aq)$ by elements from $H$; that is, there are elements $g_1, \ldots ,g_n$ and $ h_1, \ldots,h_n$ of $H$ such that for every $h$ in $H$ holds $ \bigvee_{i=1}^n \varphi(g_i \cdot  h\cdot h_i,\aq )$.
 				\item 	A type $p$ in $ S_1(A)$ with $A \supset H$ is \emph{generic for $H$} if all its formulae are generic for $H$.
 				\item The \textit{definable hull} of $H$ (in $G$), denoted $\dc_G(H)$, is the intersection of all definable subgroups (with parameters)  of $G$ containing $H$. 
 			\end{enumerate}

 		\end{definition}	
 		As in the case of stable groups, generic types for $H$ always exist and a definable set is generic for $H$ if and only if $H$ is covered by left translates.
 		
 		Consider the subspace $S_1^H(A)$ which consists of the types in $S_1(A)$ that are finitely satisfiable in $H$. By coheirness, each type in $S_1^H(A)$ does not fork over $H$ and its restriction to $H$ is stationary. Every generic type for $H$ lies in $S_1^H(A)$, since we considered translates by elements from $H$ in the definition of genericity for $H$ in $G$.
 	
 		\begin{fact}(\cite[Lemma 25, Lemma 26, Remark 27 and Lemma 28]{W90Subgroups}) \label{FactWagnerResults}
 			\begin{enumerate}
 				\item \label{FactWagnerProprertiesDC} The groups $\dc_G(H)$ and $\dc_G(H)^0$ are type-definable over $H$, and hence do not depend on the ambient model.	
 		
 				\item \label{FactWagnerGenericsResult}A type $p$ in $ S_1^H(A)$ is generic for $H$ if and only if  $\Stab_G(p)=\dc_G(H)^0$.
 				
 				\noindent In particular, every type that is generic for $H$ in $G$ is also generic in the type-definable group $\dc_G(H)$.
 			\end{enumerate}
 		\end{fact}
 		The first part follows from the Baldwin-Saxl condition for definable groups in stable theories. For the second part, Wagner uses that a definable subgroup $H'$ contains $\dc_G(H)^0$ if and only if the index $[H:H\cap H']$ is finite. The general structure of proofs is as in the definable case.

 		Before exploring how Wagner's results can be used in the setting of the criterion given in the next section, we first introduce the following notion:
 		\begin{definition}\label{DefinitionPrincipal}
 			Consider two stable groups $(G_2, \cdot ) \subset (G_1,\cdot )$.  We say that $G_2$ is \emph{principal in} $G_1$ if they have the same connected components, i.e. $G_1^0=G_2^0$.
 		
 		\end{definition}
		In his lecture notes \cite{C26Groups}, Casanovas refers to this notion as being generic since it is equivalent to the partial type defining $G_2$ being generic in $G_1$ by  part (ii) of the next remark.  
 		\begin{remark}\label{RemarkVariantsGenericity}
 		Let $(G_2, \cdot ) \subset (G_1,\cdot )$ be two 
 		stable groups, both defined over $A$. The following conditions are equivalent:
 			\begin{enumerate}[(i)]
 				\item The group $G_2$ is principal in $G_1$.
 				\item Every generic type of $G_2$ (over $A$) is also a generic type of $G_1$.
 				\item There is some element $g$ of $G_2$ such that $\tp(g/A)$ is generic in $G_1$.
 			\end{enumerate}
 		\end{remark}
 	The equivalences follow from basic facts about connected components. We will give a short proof for the sake of completeness.
 	\begin{proof}
 		
 	$(i) \Rightarrow (ii):$ Take a generic type $p$ of $G_2$, so its stabilizer $\Stab_{G_2}(p)$ equals $G_2^0=G_1^0$ by assumption. Since $G_1$ is a supergroup of $G_2$, we have $\Stab_{G_1}(p) \supset \Stab_{G_2}(p)=G_1^0$ which implies that $p$ is generic in $G_1$.
 	
 	$(ii) \Rightarrow (iii)$: This implication is trivial.
 	
 	$(iii) \Rightarrow (i)$: We may assume that $A=\acl(A)$. First note that $G_2^0 \subset G_1^0 \cap G_2$ since $G_2$ is a subgroup  of $G_1$. Let $g$ be as in the assumption of $(iii)$ and take $g_1 \equiv_A g$ with $g_1 \ind_{A} g$. Since $G_2$ is definable over $A$, the type $\tp(g/A)$ determines a coset of the connected component $G_2^0$. In particular, the product $g_1^{-1}\cdot g$ lies in $G_2^0$. By the independence $g_1 \ind_A g$ and genericity of $g$ in $G_1$ we also get that $g_1^{-1}\cdot g$ is  generic in $G_1$. Hence, the group $G_2^0$ contains a generic element of $G_1$ which implies that $G_1^0 \subset G_2^0$. 
 	\end{proof}
	We use the remark as follows:
 	\begin{cor}\label{RemarkExistenceOfGenerics} Let $G$ and $H$ be as in Definition \ref{DefinitionWagnerGenerics}. If the definable hull $\dc_G(H)$ is principal in $G$, then for every $A \supset H$ there exists a type $p $ in $S_1^H(A)$ which is generic in $G$.
 	\end{cor}
 \begin{proof}
 	Choose $p$ to be a generic type for $H$ over $A$, then $p$ is generic in $\dc_G(H)$ by Fact \ref{FactWagnerResults} (\ref{FactWagnerGenericsResult}) and thus in $G$ by Remark \ref{RemarkVariantsGenericity}.
 \end{proof}

\subsection{Back to our Setting}\label{SectionHullSetting}

We return to the assumptions of Section \ref{SectionStationarity} and fix a sufficiently saturated model $\M_0$ of the stable $\lingua_0$-theory $T_0$ which eliminates imaginaries and quantifiers. As in Section \ref{SectionHullGeneralResults}, we assume that $\M_0$ is the universe of a group $(G,\cdot)$. In addition, we now consider  a simple $\lingua$-theory $T$ with $\lingua_0 \subset \lingua$ such that $T_0^\forall \subset T$ (see Remark \ref{RemarkBasicFactsOnSetting}).

Inside $\M_0$, we fix a sufficiently saturated model $\M$ of $T$ and work either in $\M$ or $\M_0$. Furthermore, let $\N$ be a small elementary substructure of $\M$ and $H$ an $\lingua(N)$-definable subset of $M$. 
We now suppose that $(H,\cdot)$ is a group in $\M$ (i.e. closed under $\cdot$ and inverses). 

In particular, both the $N$-points $(H(N),\cdot)$ and the $M$-points $(H(M),\cdot)$ form
(possibly non-definable) subgroups of $(G, \cdot)$.

 	\begin{remark}\label{RemarkDefinableHullSame} In the just described setting, the definable hulls of $H(M)$ and $H(N)$ in $\M_0$ coincide, that is $\dc_G(H(M))=\dc_G(H(N))$. 
\begin{proof}
	 Since  $H(N) \subset H(M)$, we have $\dc_G(H(N)) \subset \dc_G(H(M))$ as type-definable sets.	 
	 
	 On the other hand, if $\psi(x)$ is an $\lingua_0(N)$-formula such that $H(N) \subset \psi(M_0)$, then (by quantifier elimination of $T_0$) \(\mathcal{N} \models \forall x(H(x) \to \psi(x))\). Since $\N \prec \M$ we thus have that $H(M) \subset \psi(M_0)$, so $\dc_G(H(M)) \subset \dc_G(H(N))$ by Fact \ref{FactWagnerResults} (\ref{FactWagnerProprertiesDC}).
\end{proof}
 	\end{remark}

  \textit{Warning}: Since $(H,\cdot)$ is an $\lingua(N)$-definable group in the simple theory $T$, we now have two different notions of genericity for elements in $H$. A type is said to be ($\lingua$-)generic \textit{in} $H$ if it is generic in the simple theory $T$ and generic \textit{for} $H(M)$ as in  Definition \ref{DefinitionWagnerGenerics} if we view $(H(M),\cdot)$ as a subgroup of $(G,\cdot)$.
  
However, these notions are related if $T$ is stable:
 		\begin{remark}\label{RemarkGenericityStableCorrespondance}
 			Let $(G,\cdot)$ and $(H,\cdot)$ be as above and suppose that $T$ is stable. An element $g$ of $H(M)$ is generic for $H(M)$ (working in $\M_0$) if it is $\lingua$-generic in $H$ over $N$ (working in $\M$).
 			
 			In particular, such an element $g$ is $\lingua_0$-generic over $N$ in $\dc_G(H(M))$ by Fact \ref{FactWagnerResults} (\ref{FactWagnerGenericsResult}).
 		\end{remark} 
 		
 		\begin{proof} First work in $\M$. Since $T$ is stable and $\tp_{\lingua}(g/N)$ is generic in $H$, we find  for every formula $\varphi(x)$ in $\tp_\lingua(g/N)$ elements $g_1, \ldots, g_n$ and $h_1, \ldots, h_n$ of $H(M)$ such that $\M \models  \bigvee_{i=1}^n  \varphi(g_i \cdot h \cdot h_i)$ for every element $h$ of $H(M)$.
 		
 		If $\varphi$ is a quantifier-free $\lingua_0$-formula, we then also have  $\M_0 \models  \bigvee_{i=1}^n  \varphi(g_i \cdot h\cdot  h_i)$ since $\M$ is a substructure of $\M_0$. In particular, this is true for all formulas in $\tp_0(g/N)$ by quantifier elimination of $T_0$. Hence, working in $\M_0$, the type $\tp_0(g/N)$ is generic for $H(M)$.
 		\end{proof}
 		We will now consider the definable hulls in $\M_0$ of two different subsets of $\M$, one being contained in the other.
		
 		\begin{lemma}\label{LemmaEquivalentPrincipalHulls}
 			Given two $\lingua(N)$-definable sets $H_1$ and $H_2$ such that $(H_1,\cdot)$ and $(H_2,\cdot)$ are groups and $(H_2,\cdot) \subset (H_1,\cdot)$, the following two conditions are equivalent:
 		\begin{enumerate}[(i)]
 			\item The definable hull $\dc_G(H_2(N))$ is principal in the definable hull $\dc_G(H_1(N))$.
 			\item There exists an element $h$ of $H_2(M)$ that is $\lingua_0$-generic over $N$ in $\dc_G(H_1(N))$.
 		\end{enumerate}
 	
 	\end{lemma}

 \begin{proof}
 	Suppose that  $\dc_G(H_2(N))$ is principal in the definable hull $\dc_G(H_1(N))$.
 	Working in $\M_0$, there exists  an $\lingua_0$-Type $p$ that is finitely satisfiable in $H_2(N)$ and generic in $\dc_G(H_1(N))$ by Remark \ref{RemarkExistenceOfGenerics} applied to $H_2(N)$ as a subgroup of $\dc_G(H_1(N))$.
 	By quantifier elimination of $T_0$, the $\lingua_0$-type $p$ yields a partial $\lingua$-type over $N$ in $\M$ which is still finitely satisfiable in $H_2(N)$. This means that the partial type $p \cup \{H_2(x)\}$ over $N$ is consistent in  $\M$ and thus realized by an element $h$. Hence, we have found an element as in $(ii)$.
 	
 		On the other hand, an element $h$ as in $(ii)$ lies in $\dc_G(H_2(M))$ which  by Remark \ref{RemarkDefinableHullSame} equals $\dc_G(H_2(N))$. Remark \ref{RemarkVariantsGenericity} yields that $\dc_G(H_2(N))$ is principal in $\dc_G(H_1(N))$.
 \end{proof}

 		\section{Imaginaries}\label{SectionImaginaries}
 
	Recall that an $\lingua$-theory $T$ has \textit{geometric elimination of imaginaries} if every imaginary $\alpha$ is $\lingua$-interalgebraic with a real tuple $\bar{a}$.
	The theory has 	\textit{weak elimination of imaginaries} if additionally $\alpha$ is definable over  $\bar{a}$. If $T$ has weak elimination of imaginaries, then $\acl_{\lingua}^{eq}(A)=\dcl_{\lingua}^{eq}(\acl_{\lingua}(A))$ for every real set $A$, so types over real algebraically closed sets are stationary. By \cite[Proposition 3.4]{CF04Imaginaries}, 
	a stable theory has weak elimination of imaginaries if it has geometric elimination of imaginaries and types over real algebraically closed sets are stationary.

Delon showed in \cite[p. 36]{D88Ideaux} that the theory $SCF_p^\infty$ does not have elimination of imaginaries. Her proof easily generalizes to show that geometric elimination also fails. A similar proof was given by Messmer and Wood \cite[Remark 4.3]{MW95der} to show that elimination of imaginaries fails in the theory $DCF_p$ of differentially closed fields of positive characteristic. Pillay and Vassiliev showed in \cite[Lemma 3.3]{PV04Imaginaries} with a different proof that whenever the stable nfcp theory $T_0$ has elimination of imaginaries, the theory $T=T_0P$ of beautiful pairs of $T_0$ has geometric elimination of imaginaries if and only if no infinite group is definable in $T_0$. In particular, the theory of proper pairs of algebraically closed fields does not have geometric elimination of imaginaries.

In this section, we present a criterion for a theory not to have geometric elimination of imaginaries that extracts the essences of Delon's  proof as well as Messmers's and Wood's proof and  which applies to all examples mentioned above. 

We work in the setting of Lemma \ref{LemmaEquivalentPrincipalHulls} in the previous Subsection \ref{SectionHullSetting}: Let $T_0$ be a stable $\lingua_0$-theory (with elimination of imaginaries and quantifiers) and $\M_0$ a sufficiently saturated model. Consider a (simple) $\lingua$-theory $T$ such that $T_0^\forall \subset T$ and choose a sufficiently saturated model $\M$ of $T$ inside $\M_0$.

We again suppose that there are an $\lingua_0$-definable group $(G,\cdot)$ in $M_0$ as well as $\lingua$-definable subsets $H_2 \subset H_1$ of $M$ such that $(H_1,\cdot)$ and $(H_2,\cdot)$ are groups in $\M$; all $G, H_1$ and $H_2$ being defined over a small elementary substructure $\N$ of $\M$.

The first lemma we present is somewhat technical but contains the common idea of the different applications.

\begin{lemma}\label{LemmaCommonEliminationCriterion}
	Suppose that in the setting just described the following conditions are satisfied:
	\begin{enumerate}[(i)]
		\item\label{ConditionLemma1} For tuples $\aq$ and $\bq$ from $M$ holds $\aq \indL_N \bq \quad  \Rightarrow \quad  \aq \indLn_N \bq$.
		\item\label{ConditionLemma2} There exists an element $h$ of $H_2(M)$ that is $\lingua_0$-generic over $N$ in $\dc_G(H_1(N))$.
		\item\label{ConditionLemma3}  There exists an element $g$ of $H_1(M)$ with $g \indL_N h$ such that $\acl_{\lingua}(g,N) \subset \acl_0(g,N)$ as well as $\acl_\lingua(g\cdot h,N)  \subset \acl_0(g\cdot h, N)$.
	\end{enumerate}
	If the canonical parameter of the coset $g \cdot H_2$ of $H_2$ in $H_1$ is $\lingua$-interalgebraic over $N$ with a real tuple, then the coset is $\lingua(N)$-definable. Moreover, we then have that the right-stabilizer $\Stab^r_{H_1}(\tp_{\lingua}(g/N))$ is contained in $H_2$.
\end{lemma}
Note that if we further assume that $T$ is stable and $H_2 \subset \Stab^r_{H_1}(\tp_{\lingua}(g/N))$,  the conclusion of  Lemma \ref{LemmaCommonEliminationCriterion} that $g\cdot H_2$ is $\lingua(N)$-definable yields  $\Stab^r_{H_1}(\tp_{\lingua}(g/N))=H_2$. By \cite[Lemma 4.4]{PGST} we then have that $H_2$ is connected and that $\tp_{\lingua}(g/N)$ is the generic type of $g \cdot H_2$. 

 \begin{proof}
 	Given $g$ and $h$ as in (\ref{ConditionLemma2}) and (\ref{ConditionLemma3}), let $\alpha$ be the canonical parameter of the coset $g\cdot H_2= g \cdot h \cdot H_2$. Suppose that $\alpha$ is $\lingua$-interalgebraic over $N$ with the real tuple $\bar{a}$.

 	By Condition (\ref{ConditionLemma1}), the $\lingua$-independence implies that $g \indLn_N h$. Working now in $\M_0$, observe that $g$ and $h$ are both in $H_1(M) \subset \dc_G(H_1(M))= \dc_G(H_1(N))$ by Remark \ref{RemarkDefinableHullSame}. Thus, genericity of $h$ in $\dc_G(H_1(N))$ yields that 
 	\[g\cdot h \indLn_N g \,.\tag{$\star$}\]
 	
 		Considering $\M$ again, note that $\alpha$ is $\lingua(N)$-definable over any element of the coset because $H_2$ is $\lingua(N)$-definable. Thus, the tuple $\aq$ is $\lingua$-algebraic both over $N,g$ and $N,g\cdot h$. 
 	
 	 Condition (\ref{ConditionLemma3}) and the independence in $(\star)$ now yield that
 	\[\bar{a} \in \acl_0(g,N)\cap \acl_0(g\cdot h,N) \cap M\subset \acl_0(N) \cap M=N.\]
 	Hence, the imaginary $\alpha$ lies in $\acl_{\lingua}^{eq}(N)=N^{eq}$ (since $N$ is a model of $T$). In particular, the coset $g \cdot H_2$ is $N$-definable.\\
 	
 For the moreover part note that it is enough to check that $\St^r_{H_1}(\tp_{\lingua}(g/N))\subset H_2$ where $\St^r_{H_1}(\tp_{\lingua}(g/N))=\{h' \in H_1 \mid \exists \, g'\eqL_N g \text{ with } g' \indL_N h' \text{ and } g' \cdot h'\eqL_N g\}$. 
 Let thus $h'$ be in $\St^r_{H_1}(\tp_{\lingua}(g/N))$, witnessed by $g' \eqL_Ng$. Since $g \cdot H_2$ is $\lingua(N)$-definable, we have that $g' \cdot H_2 =g \cdot H_2=g' \cdot h'  \cdot H_2$. By left-cancellation this implies that $h'$ is in $H_2$.
 \end{proof}
	\begin{remark}\label{RemarkEliminationCriterionChecking}
	Some of the conditions in Lemma \ref{LemmaCommonEliminationCriterion} have already appeared in a previous context in this paper:
	\begin{itemize}
		\item As a model of $T$, the set $N$ is $\lingua$-algebraically closed. Thus Property \ref{HypothesisDcl} introduced in Section \ref{SectionStationarity} yields by Fact \ref{FactIndependenceImplication} that Condition (\ref{ConditionLemma1}) holds. The Property \ref{HypothesisDcl} will be satisfied in all our examples.
		\item Since $T$ is controlled by $T_0$, the second part of Condition (\ref{ConditionLemma3}) is equivalent by Remark \ref{RemarkBasicFactsOnSetting} (\ref{ItemZusammenhangDerAcls}) to   ${\acl_{\lingua}(g,N) = \acl_0(g,N) \cap M}$ and ${\acl_{\lingua}(g\cdot h,N) = \acl_0(g \cdot h,N) \cap M}$.
	\end{itemize}
\end{remark}
In order to show the failure of elimination of imaginaries, 
we first give a criterion for the case that $T$ is stable:

\begin{prop}\label{PropEliminationCriterionStable}
	Consider a stable $\lingua$-theory $T$ with $T_0^\forall \subset T$ for the stable $\lingua_0$-theory $T_0$ (see Section \ref{SectionStationarity}). Inside a sufficiently saturated model $\M_0$ of $T_0$ choose $\M\models T$ also sufficiently saturated.
	
	Suppose that there is an $\lingua_0$-definable group $(G,\cdot)$ in $M_0$ and  an $\lingua$-definable subset $H_1$ of $M$ such that $(H_1,\cdot)$ is a group in $\M$; both $G$ and $H_1$ being defined over a small elementary substructure $\N$ of $\M$. 
	Assume further that:
	\begin{enumerate}[(1)]
		\item\label{ConditionEliminationCriterionStableIndependence} For tuples $\aq$ and $\bq$ from $M$ holds $\aq \indL_N \bq \quad  \Rightarrow \quad  \aq \indLn_N \bq$.
		\item \label{ConditionEliminationCriterionStableTypeAcl} There exists a type $p$ over $N$ of an element of $H_1$ such that for every realization  $g$ of $p$ holds $\acl_{\lingua}(g,N) \subset \acl_0(g,N)$.
		\item\label{ConditionEliminationCriterionStableSubgroup} The right stabilizer $\Stab^r_{H_1}(p)$  contains a proper (infinite) $\lingua(N)$-definable subgroup $H_2$ such that the definable hull $\dc_G(H_2(N))$ is principal in the definable hull $\dc_G(H_1(N))$ (c.f. {Defi\-nition} \ref{DefinitionPrincipal}), that is $\dc_G(H_2(N))^0=\dc_G(H_1(N))^0$.
	\end{enumerate}
	Then, the theory $T$ does not have geometric elimination of imaginaries, even after adding constants for the model $\N$ to the language.
\end{prop}
\begin{proof}
	By Lemma \ref{LemmaEquivalentPrincipalHulls}, the second part of Condition (\ref{ConditionEliminationCriterionStableSubgroup}) yields an element $h$ of $H_2(M)$ that is $\lingua_0$-generic over $N$ in $\dc_G(H_1(N))$. Let $g$ in $H_1$ be a realization of $p$ with $g \indL_N h$. Since $T$ is stable and $H_2 \subset \Stab^r_{H_1}(p)$, it follows that $g \cdot h$ is a realization of $p$ and in particular that $\acl_{\lingua}(g\cdot h,N) \subset \acl_0(g \cdot h,N)$. Hence, all conditions of Lemma \ref{LemmaCommonEliminationCriterion} are satisfied. 
	
	Suppose for a contradiction that the canonical parameter of the coset $g \cdot H_2$ is interalgebraic over $N$ with a real tuple. Lemma \ref{LemmaCommonEliminationCriterion} then yields that  $\Stab^r_{H_1}(g/N)=\Stab^r_{H_1}(p)$ is contained in $H_2$, which contradicts Condition (\ref{ConditionEliminationCriterionStableSubgroup}). 
\end{proof}
	If the group $H_1$ in Proposition \ref{PropEliminationCriterionStable} is connected and $p$ a generic type of $H_1$, the subgroup $H_2$ in Condition (\ref{ConditionEliminationCriterionStableSubgroup}) is simply an infinite index subgroup of  $H_1$. 
	This observation yields the following criterion for simple theories.

	\begin{prop}\label{PropEliminationCriterionSimple}
		Consider a simple $\lingua$-theory $T$ with $T_0^\forall \subset T$ for the stable $\lingua_0$-theory $T_0$ (see Section \ref{SectionStationarity}). Inside a sufficiently saturated model $\M_0$ of $T_0$ choose $\M\models T$ also sufficiently saturated.
		
		Suppose that there is an $\lingua_0$-definable group $(G,\cdot)$ in $M_0$ and  an $\lingua$-definable subset $H_1$ of $M$ such that $(H_1,\cdot)$ is a group in $\M$; both $G$ and $H_1$ being defined over a small elementary substructure $\N$ of $\M$. 
		Assume further that:
		\begin{enumerate}[(1)]
			\item\label{ConditionEliminationCriterionIndependence} For tuples $\aq$ and $\bq$ from $M$ holds $\aq \indL_N \bq \quad  \Rightarrow \quad  \aq \indLn_N \bq$.
			\item[(2')]\label{ConditionEliminationCriterionAcl}For every $\lingua$-generic $g$ in $G(M)$ over $N$ we have that $\acl_{\lingua}(g,N) \subset \acl_0(g,N)$.  
			\setcounter{enumi}{3}
			\item[(3')] \label{ConditionEliminationCriterionIndex} There is an infinite $\lingua(N)$-definable subgroup $H_2$  of $H_1$ of infinite index such that the definable hull $\dc_G(H_2(N))$ is principal in the definable hull $\dc_G(H_1(N))$ (c.f. {Defi\-nition} \ref{DefinitionPrincipal}), that is $\dc_G(H_2(N))^0=\dc_G(H_1(N))^0$.
		\end{enumerate}
		Then, the theory $T$ does not have geometric elimination of imaginaries, even after adding constants for the model $\N$ to the language.
	\end{prop}

	\begin{proof}

		By Lemma \ref{LemmaEquivalentPrincipalHulls} there exists an element $h$ of $H_2(M)$ that is $\lingua_0$-generic over $N$ in $\dc_G(H_1(N))$.
		Work in $\M$ and choose an element $g$ of $H_1$ that is $\lingua$-generic over $Nh$. In particular, $g \indL_N h$ and thus $g \cdot h$ is also generic in $H_1$ over $N$. Condition (\hyperref[ConditionEliminationCriterionAcl]{2'}) yields that $\acl_{\lingua}(g,N) \subset \acl_0(g,N)$ as well as $\acl_\lingua(g\cdot h,N)  \subset \acl_{0}(g\cdot h, N)$
		
		We will again show that the canonical parameter of the coset $g\cdot H_2$ in $H_1$ cannot be geometrically eliminated: Otherwise, by Lemma \ref{LemmaCommonEliminationCriterion}, we have that $\Stab^r_{H_1}(g/N) \subset H_2$. As $g$ is $\lingua$-generic in $H_1$, this stabilizer equals the connected component $H_1^0$ which contradicts that $H_2$ has infinite index in $H_1$.
	\end{proof}

\begin{remark}
	Most parts of the proofs of the two previous criteria were carried out in the stable theory $T_0$. The stability or simplicity of $T$ was mainly used to choose an appropriate element $g$. 
	Indeed, suppose that $T$ is an arbitrary $\lingua$-theory controlled by $T_0$ such that Condition (\hyperref[ConditionEliminationCriterionIndex]{3'}) of Proposition \ref{PropEliminationCriterionSimple} holds. Assuming moreover that $\acl_{\lingua}(g,N) \subset \acl_0(g,N)$ for every $g$ in $H_1$ and that there exists an independence relation $\indS$ in $T$ satisfying full existence with $\indS \Rightarrow \indLn$ over $N$ (instead of  (\ref{ConditionEliminationCriterionIndependence})), the proof of Lemma \ref{LemmaCommonEliminationCriterion} yields that all cosets of $ H_2$ are $N$-definable which is a contradiction by a straightforward compactness argument.   
	
	In particular, the criterion applies to some non-simple theories; for example d'Elbée's $NSOP_1$-theory $ACFG_p$ presented  in \cite[p.16]{DE19additive}, the model companion of the theory of algebraically closed fields in positive characteristic with a predicate for an additive subgroup. 
\end{remark}

		In the proofs of Proposition \ref{PropEliminationCriterionStable} and \ref{PropEliminationCriterionSimple} we only used that $\dc_G(H_2(N))^0=\dc_G(H_1(N))^0$ to find an element $h$ in $H_2$ such that for $g$ in $H_1$ (generic or realizing the type $p$) with $g \indL_N h$ holds $g \indLn_N g\cdot h$.
		 We will now see that in the case of $g$ being generic in $H_1$ and $T$ stable, the above gives an equivalent condition. In order to do so, we need to recall a result of Ziegler, originally stated for abelian groups in \cite{Z06Cosets}. For the purpose of this note, we will use a generalized reformulation given by Blossier, Martin-Pizarro and Wagner in \cite{BMPW15Tore}.

\begin{fact}(c.f. \cite[Lemme 1.2]{BMPW15Tore})\label{FactZieglersLemma}
	Let $(G,\cdot )$ be a type-definable group  in a stable theory and  suppose there are elements $g$ and $h$ of $G$ such that $g, h$ and $g\cdot h$ are pairwise independent over  a set $A$. Then $h$ is generic in the coset $\Stab_G(\stp(h/A)) \cdot h$ and moreover ${g\cdot \Stab_G(\stp(h/A))\cdot g^{-1}}=\Stab_G(\stp(g/A))$.
\end{fact}

	\begin{lemma}
		Suppose that $T$, $T_0$, $(G,\cdot)$ and $(H_1,\cdot)$ are as in Proposition \ref{PropEliminationCriterionStable} and let $(H_2,\cdot)$ be an $\lingua(N)$-definable subgroup of $H_1$. Moreover suppose that Property \ref{HypothesisDcl} (c.f. Section \ref{SectionStationarity}) holds. The following are equivalent:
		\begin{enumerate}[(i)]
			\item The definable hull $\dc_G(H_2(N))$ is principal in the definable hull $\dc_G(H_1(N))$ (see Condition (\hyperref[ConditionEliminationCriterionIndex]{3'}) of Proposition \ref{PropEliminationCriterionStable}).
			\item There exist elements $h$ in $H_2$ and $g$ in $H_1$ such that		
				$g$ is $\lingua$-generic in $H_1$ over $N$ with $g \indL_N h$ and $g \indLn_N g\cdot h$. 
		\end{enumerate}

		\end{lemma}
	\begin{proof}
	First note that Property \ref{HypothesisDcl} yields Condition $(\ref{ConditionEliminationCriterionStableIndependence})$ of Proposition \ref{PropEliminationCriterionStable} (by Fact \ref{FactIndependenceImplication}).
	The proof of Lemma \ref{LemmaCommonEliminationCriterion} (using   Lemma \ref{LemmaEquivalentPrincipalHulls}) then shows that $(i)$ implies $(ii)$. 
	
	On the other hand suppose that $(ii)$ holds.
	Let $h$ in $H_2$ and $g$ in $H_1$ generic over $N,h$ be as in the assumption. Since  $g \indL_N h$ , we obtain $g\cdot h \indL_N h$ by genericity of $g$. As Condition  $(\ref{ConditionEliminationCriterionStableIndependence})$ holds, the $\lingua$-independences imply that \[g \indLn_N h \quad \text{ and } \quad g\cdot h \indLn_N h.\]
		From our assumption, it follows that $g, h$ and $g\cdot h$ are pairwise $\lingua_0$-independent elements of $\dc_G(H_1(M))=: \tilde{G}$ (for simplicity of notation).

		 Recall that all $\lingua_0$-types are stationary over $N$ by Fact \ref{FactStationarityT0}. By Fact \ref{FactZieglersLemma}, the type $\tp_0(h/N)$ is hence generic in a coset of its stabilizer and the latter is a conjugate of $\Stab_{\tilde{G}}(\tp_0(g/N))$. By Remark \ref{RemarkGenericityStableCorrespondance}, the element $g$ is $\lingua_0$-generic over $N$ in $\tilde{G}$ and thus $\Stab_{\tilde{G}}(\tp_0(g/N))=\tilde{G}^0$. Since the connected component is a normal subgroup of $\tilde{G}$, we have $\Stab_{\tilde{G}}(\tp_0(h/N)) =\tilde{G}^0$. 
		
		In particular, the element $h$ of $H_2(M)$ 
		is $\lingua_0$-generic over $N$ in $\tilde{G}=\dc_G(H_1(M))$. By Lemma \ref{LemmaEquivalentPrincipalHulls} this is equivalent to (i) (using that $\dc_G(H_1(M))=\dc_G(H_1(N))$ by Remark \ref{RemarkDefinableHullSame}).
	\end{proof}

	\subsection{Examples}
	Since Lemma \ref{LemmaCommonEliminationCriterion} was inspired by Delon's proof for separably closed fields of infinite degree of imperfection, the criterion in Proposition \ref{PropEliminationCriterionStable} or \ref{PropEliminationCriterionSimple} can easily be applied to this theory. Moreover, these propositions yield a new proof of non-elimination for beautiful pairs of theories with a definable groups.

	\begin{cor}\label{CorNonElSCFandPairs}
		The following theories do not have geometric elimination of imaginaries even after adding constants for a small submodel to the language:
		\begin{enumerate}[a)]
			\item The theory $SCF_p^\infty$ of separably closed fields of infinite degree of imperfection.
			\item The theory $T_0P$ of beautiful pairs of a stable $\lingua_0$-theory $T_0$ (with elimination of quantifiers and imaginaries) where $T_0$ does not have the finite cover property and an infinite group is definable in $T_0$. 
		\end{enumerate}
	\end{cor}
	In particular the theories $ACF_pP$, $DCF_0P$ and $SCF_p^eP$ of beautiful pairs of algebraically closed fields, differential closed fields in characteristic $0$ resp. separably closed fields of finite degree of imperfection do not have geometric elimination of imaginaries; yet types over  real algebraically closed sets are stationary.

	\begin{proof}

	The desired statement follows from Proposition \ref{PropEliminationCriterionSimple} after checking that all conditions hold. (We could also use Proposition \ref{PropEliminationCriterionStable}, but since our group $H_1$ is connected in both cases, this amounts to checking the same conditions.) \\
		a) 		As in Section \ref{SectionExamples}, we consider $T=SCF_p^\infty$ in the language $\lingua=\{0,1,+,-,\cdot , ^{-1}\}$ and $T_0=ACF_p$ with $\lingua_0=\lingua$. Given models $\M_0\models T_0$ and $\M \models T$ as in Proposition \ref{PropEliminationCriterionSimple}, we put $G=(M_0,+)$ the additive group of the algebraically closed field, and $H_1=(M,+)$ the additive group of $M$ itself. 
	
		\begin{enumerate}[(1):]

			\item This follows immediately from the characterization of independence given by Srour in \cite{S86Independence} (see Section \ref{SectionExamples}).
			\item[(2'):] Note that an element of $M$ is additively generic over $N$ if it does not lie in the field $M^p \cdot N$. In particular, the extension $M/N(g)$ of fields is separable which by the characterization of algebraic closure in separably closed fields yields Condition (\hyperref[ConditionEliminationCriterionAcl]{2'}).
			\item[(3'):] Set $H_2=M^P$ the subfield of $p$-powers which is by definition a subgroup of infinite index. Since $ACF_p$ is strongly minimal and $H_2(N)$ and $H_1(N)$ are infinite subgroups of $(M_0,+)$, it follows that $\dc_G(H_2(N))=(M_0,+)=\dc_G(H_1(N))$. 
		\end{enumerate}
		b) Let $T_0$ be a stable nfcp theory and consider the theory $T=T_0P$ of beautiful pairs of $T_0$ as in Section \ref{SectionExamples} in the language $\lingua=\lingua_0 \cup \{P\}$. Choose $\M \models T$ sufficiently saturated, then by definition $\M\models T_0$ (so we can put $\M_0=\M$). By assumption, there exist an $\lingua_0$-definable group $(G,\cdot)$ in $T_0$ which we may assume to be defined over $P(M)$ by saturation of the predicate. We choose a small elementary substructure $\N$ containing the necessary parameters and will now show that $T$ does not eliminate imaginaries in $\lingua(N)$. For simplicity we assume again that $G = M_0$. Put $H_1=G$ (since $M_0=M$). 
	
		\begin{enumerate}[(1):]
			
				\item This is immediate from the characterization of independence in \cite{BYPV03Pairs} (see Section \ref{SectionExamples}).
				\item[(2'):] 	By the characterization of independence, the element $g$ of $H_1$ is generic in $T=T_0P$ over $N$ if and only if it is generic in $T_0$ over $N,P(M)$. 
				In particular, for $g$ generic in $H_1$ over $N$ holds	 \[g \indLn_{P(N)} N,P(M)\]	 
				which implies that the substructure $\langle g,N \rangle_{\lingua}$ is $P$-independent. This yields Condition (\hyperref[ConditionEliminationCriterionAcl]{2'}) by the characterization of the algebraic closure in \cite[Lemma 2.6]{PV04Imaginaries}.
				\item[(3'):] Let $H_2=G(M)\cap P(M)$ be the $\lingua(N)$-definable subgroup of $P$-points of $G$. Note that the index is infinite by saturation. 
				Moreover, since $H_1(M)=G$, we immediately have $\dc_G(H_1(N))=\dc_G(H_1(M))=G$ by Remark \ref{RemarkDefinableHullSame}.

				Similarly $\dc_G(H_2(N))=\dc_G(H_2(M))$. Now, the subgroup $H_2(M)$ is a subset of $P(M)$. By Fact \ref{FactWagnerResults} (\ref{FactWagnerProprertiesDC}), we have that $\dc_G(H_2(M))$ is given by the $M$-points of $\dc_{G(P(M))}(H_2(M))$ and hence equals $G$ as $H_2(M)=G(P(M))$. Thus Condition (\hyperref[ConditionEliminationCriterionIndex]{3'}) holds.
		\end{enumerate}
	\vspace{-6mm}
	\end{proof}
Note that the result for $SCF_p^\infty$ answers negatively Yoneda's question in  \cite[Question 3.1 (4)]{Y24Eliminaiton} whether $SCF_{p}^\infty$ has geometric elimination of imaginaries in the language of fields. 

We will now present an application of Proposition \ref{PropEliminationCriterionStable} with a non-generic type $p$ in Condition (\ref{ConditionEliminationCriterionStableTypeAcl}): A straightforward adaption of the proof of Messmer and Wood \cite[Remark 4.3]{MW95der} yields the failure of non-elimination in a broader class of differential fields.

A differential field $K$ of positive characteristic $p$ is separably differentially closed if $K$ is existentially closed in all separable differential extensions. The \emph{differential degree of imperfection} is the cardinality of a $p$-basis of $C_K$ over $K^p$. Ino and León Sanchez introduced the theory $SDCF_{p,e}$ of \textit{separably differentially closed fields} of fixed differential degree of imperfection $e$ in $ \setN\cup\{\infty\}$ in \cite{LM24indep} and showed completeness and stability. The case $e=0$ yields the theory $DCF_p$ which was already studied by Wood in \cite{Wood1, Wood2, Wood3} and Shelah in \cite{ShelahDCF} as the characteristic $p$ analogue of $DCF_0$.

 We consider $T=SDCF_{p,e}$ in the language $\lingua=\{0,1,+,-,\cdot,^{-1},D\}$ where $D$ is a unary function symbol interpreted as the derivation.  Note that for every $e$, the theory $T$ contains the universal part of the theory $T_0=ACF_p$ in the language $\lingua_0$ of fields.

\begin{fact}\label{FactAclInSDCF} 
	Let $K\models T$ and let $k_0$ be a differential subfield such that the extension of fields $K/k_0$ is separable, then $\acl_{\lingua}(k_0) =\acl_0(k_0) \cap K$.
\end{fact}
	For $e=\infty$ this follows directly from \cite[Lemma 4.8]{LM24indep} (using \cite[Lemma 3.8]{LM24indep}). The same works for $e$ finite since $T$ can be viewed by the quantifier elimination result in \cite[Section 6.1]{IS23differentially} as the model completion of an almost derivation-like theory (c.f. \cite[Remark 3.10]{LM24indep}).

\begin{lemma}
	For every $e$ in $\setN\cup\{\infty\}$, the theory $SDCF_{p,e}$ does not have geometric elimination of imaginaries even after adding constants for a small submodel to the language.
\end{lemma}

\begin{proof}
	As above let $T$ be the theory $SDCF_{p,e}$ and $T_0=ACF_p$. Given models ${\M_0\models T_0}$ and $\M \models T$ as in Proposition \ref{PropEliminationCriterionStable}, we put $G=({M_0\setminus \{0\}},\cdot)$ the multiplicative group of the algebraically closed field and $H_1=({M\setminus \{0\}},\cdot)$ the multiplicative group of $M$. We are done by Proposition \ref{PropEliminationCriterionStable} after checking that the conditions hold.
	
	\begin{enumerate}[(1):]
		\item This condition follows from Fact \ref{FactIndependenceImplication} since Property \ref{HypothesisDcl} is satisfied in every theory of fields (see Remark \ref{RemarkDClRedukt}).
		\item Let $p$ be the type  of an element $g$ with $D(g)=g$ and such that $g$ does not lie in the field $C_M \cdot N$ (i.e. $g$ is $p$-independent over $N$). Note that these conditions define a complete type by the quantifier elimination results of Ino and León Sánchez \cite[Theorem 6.3 and 6.6]{IS23differentially}.
		For any realization $g$ of $p$, the field $N(g)$ is by definition a differential subfield of $M$ with $M/N(g)$ separable. We hence obtain Condition (\ref{ConditionEliminationCriterionStableTypeAcl}) from Fact \ref{FactAclInSDCF}.
		\item An easy computation shows that $\Stab_{H_1}^r(p)=C_M\setminus \{0\}$. We thus put $H_2=M^{p^2}\setminus \{0\}$ which is for all $e$ a proper subgroup (since $M^p \subset C_M$). As in Corollary \ref{CorNonElSCFandPairs} we have $\dc_G(H_2(N))=({M_0\setminus \{0\}},\cdot )=\dc_G(H_1(N))$ by strongly minimality of $ACF_p$.
	\end{enumerate}
\vspace{-6mm}
\end{proof}
	For the case $e=\infty$ we could have alternatively done the same proof as for $SCF_p^\infty$ (but with $H_1=C_M$ instead of $M$).

\begin{remark}
	In fact, our proof of stationarity over real algebraically closed sets from Theorem \ref{TheoremMain} also applies to $SDCF_{p,\infty}$. Similar to the case of $SCF_p^\infty$, an $\lingua$-substructure $A$ of $\M$ is \emph{strong} if the field extension $M/A$ is separable. Using the characterization of independence given by León Sánchez and Mohamed in \cite{LM24indep} with respect to independence in $SCF_p^\infty$, the other properties also follow.
	
	Actually, an inspection of the proof of 
	\cite[Theorem 2.14]{LM24indep} 
	shows that stationarity transfers from $SCF_p^\infty$ to $SDCF_{p,\infty}$ over arbitrary $\lingua$-algebraically closed sets (and not just models). Hence, types over real algebraically closed sets in $SDCF_{p,\infty}$ are stationary. 
\end{remark}

\end{document}